\newtheorem{thrm}{Theorem}[section]
\newtheorem{cor}[thrm]{Corollary}
\newtheorem{conj}[thrm]{Conjecture}
\newtheorem{lma}[thrm]{Lemma}
\newtheorem{propn}[thrm]{Proposition}
\theoremstyle{definition}{
\newtheorem{defn}[thrm]{Definition}

}
\newcommand{\Imm}{\operatorname{Im}}
\newcommand{\gl}{\operatorname{GL}_2^+(\mathbb{Q})}
\newcommand{\uh}{\mathcal{H}}
\newcommand{\suq}{\subseteq}
\title{A Note on the Degree of Field Extensions Involving Classical and Nonholomorphic Singular Moduli}
\author{Haden Spence}
\begin{document}

\maketitle

\begin{abstract}
 In their 2015 paper \cite{Mertens2015}, Mertens and Rolen prove that for a certain level 6 ``almost holomorphic'' modular function $P$, the degree of $P(\tau)$ over $\mathbb{Q}$ for quadratic $\tau$ is as large as expected, settling a conjecture of Bruinier and Ono \cite{Bruinier2013}.  Analogously for level 1 modular functions $f$, we expect $\mathbb{Q}(f(\tau))$ to have similar degree to $\mathbb{Q}(j(\tau))$.  In this paper, I show for a wide class of level 1 almost holomorphic modular functions that
 \[\dfrac{1}{M}[\mathbb{Q}(j(\tau)):\mathbb{Q}]\leq [\mathbb{Q}(f(\tau)):\mathbb{Q}]\leq[\mathbb{Q}(j(\tau)):\mathbb{Q}]\]
 for all quadratic $\tau$ and some constant $M$.  This is proven using techniques of o-minimality, and hence can easily be made uniform; the constant $M$ depends only upon the ``degree'' of $f$ (in a certain well-defined sense).
\end{abstract}
\section{Introduction}
Let $P_\delta^N$ be the (finite) set of primitive, reduced, integer quadratic forms
\[Q(x,y)=Ax^2+Bxy+Cy^2,\]
with $N|A$, of discriminant $\delta$.  Let $\tau_Q$ be the unique root of $Q(\tau,1)$ lying in the upper half plane.  In their 2015 paper \cite{Mertens2015}, Mertens and Rolen prove that, for a certain almost holomorphic modular (AHM) function $P$ of level 6, the so-called \emph{class polynomial}
\[H^P_\delta=\prod_{Q\in P^6_\delta}\left(X-P(\tau_Q)\right)\in\mathbb{Q}[X]\]
is irreducible over $\mathbb{Q}$ whenever $\delta \equiv 1\mod 24$, thus settling a question posed by Bruinier and Ono in \cite{Bruinier2013}.  In particular, we have
\[[\mathbb{Q}(P(\tau)):\mathbb{Q}]=\#P^N_\delta\]
whenever $\tau$ is a root of a quadratic polynomial $6Ax^2+Bx+C$ with discriminant congruent to 1 mod 24.

%
%
%
\bigskip
In this note, we will be looking at similar results for some level 1 AHM functions.  A focus point is the function
\[\chi^*=1728\cdot\dfrac{E_2^*E_4E_6}{E_4^3-E_6^2},\]
where $E_4$ and $E_6$ are the usual Eisenstein series and $E_2^*$ is the so-called \emph{almost holomorphic} Eisenstein series
\[E_2^*(\tau)=E_2(\tau)-\dfrac{3}{\pi\Imm\tau}.\]
The function $\chi^*$ is a level 1 AHM function and has been studied or has arisen incidentally in various places such as \cite{Griffin2015}, \cite{Masser1975}, \cite{Zagier2008}.  Together with $j$, this function $\chi^*$ generates the field $F^*$ of level 1 AHM functions, that is 
\[F^*=\mathbb{C}(j,\chi^*).\]
For those unfamiliar with AHM functions, this will suffice for the purposes of this paper as a definition of $F^*$.  For a more intrinsic definition including functions of higher level, see the excellent survey by Zagier, \cite{Zagier2008}.

Various facts about the arithmetic properties of $\chi^*$ are known.  Elsewhere, I have investigated the ``special sets'' of $\chi^*$, in the context of an Andr\'e-Oort type result; see \cite{Spence2016}.  For this note we will be focussing only on the special \emph{points} of AHM functions.

It is known, thanks to Masser \cite{Masser1975}, that $\chi^*(\tau)$ is an algebraic number when $\tau$ is quadratic.  Indeed, Masser shows that $\mathbb{Q}(\chi^*(\tau))\suq\mathbb{Q}(j(\tau))$ for all quadratic points.  This is in contrast with the case considered by Mertens and Rolen; the function $P$ considered there, having level greater than 1, does \emph{not} satisfy such an inclusion of fields.  Nonetheless, many of the techniques of Mertens and Rolen carry over nicely to level 1 AHM functions like $\chi^*$.  Indeed, a paper of Braun, Buck and Girsch \cite{Braun2015} uses the same number-theoretic techniques to extend to the Mertens/Rolen result to a wide variety of AHM functions.  In light of this, we might expect a result like the following.
\begin{conj}\label{conj:Equality}
 The class polynomial of $\chi^*$, namely
\[H^{\chi^*}_\delta=\prod_{Q\in P^1_\delta}\left(X-\chi^*(\tau_Q)\right)\]
is irreducible over $\mathbb{Q}$, and hence $\mathbb{Q}(j(\tau))=\mathbb{Q}(\chi^*(\tau))$.  
\end{conj}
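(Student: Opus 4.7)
The plan is to follow the Mertens--Rolen template adapted to level $1$, which is cleaner here because Masser's inclusion $\mathbb{Q}(\chi^*(\tau))\suq\mathbb{Q}(j(\tau))$ already places all values $\chi^*(\tau_Q)$ inside the ring class field $H_K$ of $K=\mathbb{Q}(\sqrt{\delta})$. Consequently $H_\delta^{\chi^*}\in\mathbb{Q}[X]$ holds automatically, and the task reduces to showing (a) the set $\{\chi^*(\tau_Q):Q\in P_\delta^1\}$ is a single $\operatorname{Gal}(\overline{\mathbb{Q}}/\mathbb{Q})$-orbit, and (b) its elements are pairwise distinct. Together (a) and (b) force the class polynomial, a priori of degree $\#P_\delta^1$, to be irreducible, and combined with Masser's inclusion this yields the equality $\mathbb{Q}(j(\tau))=\mathbb{Q}(\chi^*(\tau))$.

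For (a) I would derive a Shimura-reciprocity style rule for $\chi^*$. The class group $\operatorname{Cl}(K)$ acts transitively on $\{j(\tau_Q)\}$, and one tracks how this action lifts through Masser's explicit presentation of $\chi^*(\tau)$ as an algebraic function of $j(\tau)$ and a period quotient. The expected outcome is that $\sigma\cdot\chi^*(\tau_{Q_0})=\chi^*(\tau_{\sigma(Q_0)})$, making $\{\chi^*(\tau_Q):Q\in P_\delta^1\}$ a single Galois orbit, possibly with repetition.

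For (b) the natural first attempt is analytic. From the $q$-expansion
\[\chi^*(\tau)=q^{-1}+\sum_{n\geq 0}a_nq^n-\tfrac{3}{\pi\Imm\tau}\cdot h(\tau),\]
where $h$ denotes the weakly holomorphic piece that multiplies the non-holomorphic correction, and using that distinct reduced forms $Q\neq Q'$ yield well-separated $\tau_Q,\tau_{Q'}$ in the standard fundamental domain, the dominant terms $q_Q^{-1}$ cannot coincide for sufficiently large $|\delta|$. Finitely many small $|\delta|$ could be handled by direct computation.

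The real obstacle is upgrading (b) to an unconditional statement. The o-minimality argument behind this paper's main theorem is blind to whether the constant $M$ is $1$ or larger, because it registers only the transcendence-theoretic complexity of $\chi^*$ and ignores the fine arithmetic of singular moduli; the same is true of the Pila--Wilkie counting that underlies it. Cutting $M$ down to $1$ therefore appears to require a genuinely new arithmetic ingredient, most plausibly a precise analysis of the Shimura-type transformation law of $E_2^*$ at quadratic points, sharper than what Masser's algebraicity theorem delivers. I would expect (a) to be essentially formal once Masser's formula is in hand, and (b) to be where any honest proof does its actual work.
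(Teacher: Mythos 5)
The statement you are addressing is stated in the paper as a \emph{conjecture}: the paper offers no proof of it, and indeed its main results (Theorems \ref{thrm:WeakBoundedness} and \ref{thrm:BoundednessOfExtension}) only bound the multiplicity $k_\delta$ in $H^{\chi^*}_\delta=p^{k_\delta}$ by a constant $M$, not by $1$. Measured against that, your proposal is not a proof either, and you essentially concede this yourself. Your step (a) is genuine but not new: the Galois equivariance $\sigma(\chi^*(\tau))=\chi^*(\tau')$ whenever $\sigma(j(\tau))=j(\tau')$ is exactly Proposition \ref{propn:GaloisControl} (due to Masser, and reproved in \cite{Spence2016}), and it already yields Corollary \ref{cor:ClassPolysPowerOfIrreducible}, i.e.\ that $H^{\chi^*}_\delta$ is a power of an irreducible polynomial. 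So the entire content of the conjecture is concentrated in your step (b): the injectivity of $Q\mapsto\chi^*(\tau_Q)$ on $P^1_\delta$, equivalently $k_\delta=1$.

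For (b) your sketch does not go through as stated. Comparing ``dominant terms $q_Q^{-1}$'' cannot separate the values, because distinct reduced forms of the same discriminant frequently share the same leading coefficient $A$, hence the same $\Imm\tau_Q=\sqrt{|\delta|}/2A$ and the same $|q_Q^{-1}|$; any separation must come from the real parts together with careful control of the error term $E(q)$ and of the non-holomorphic factor $\bigl(1-\tfrac{3}{\pi\Imm\tau}\bigr)$, which is precisely the delicate estimate carried out by Mertens--Rolen and by Braun--Buck--Girsch, and even there it succeeds only under congruence restrictions on $\delta$ (e.g.\ $\delta\equiv 1 \bmod 24$ in the level $6$ case) and after finite computation for small discriminants. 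You are right that the o-minimal/Pila--Wilkie machinery of this paper is structurally incapable of forcing $M=1$; but ``a genuinely new arithmetic ingredient is needed'' is a diagnosis of the gap, not a way of closing it. As it stands, your argument proves nothing beyond what Proposition \ref{propn:GaloisControl} and the cited analytic literature already give, and the conjecture remains open.
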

Existing results, particularly from the paper of Braun, Buck and Girsch \cite{Braun2015}, get very close to this.  The techniques involve a lot of powerful number-theoretic machinery, centred around two fairly simple-sounding ingredients.  

The function $\chi^*$ may be written as a sum of two $q$-expansions, yielding
\[\chi^*(\tau)=\left(1-\dfrac{3}{\pi\Imm \tau}\right)q^{-1}+E(q),\]
where $q=e^{2\pi i\tau}$ and $E$ is an error term consisting of nonnegative powers of $q$.  The first ingredient is to get some sensible absolute bound on the size of $E$ as $\tau$ varies in $\mathbb{F}$.  This gives the result for $\tau$ of sufficiently large discriminant.  The second ingredient is the calculation of $\chi^*(\tau)$ for the finitely many quadratic $\tau$ of small discriminant.  The smaller the bound found for $E$, the fewer such calculations are needed.  

I will approach this problem from a rather different direction.  Using the theory of o-minimal structures, a branch of model theory, one can make good steps towards statements like \ref{conj:Equality}.  The central result of this note is of this type; we prove (a stronger version of) the following.
\begin{thrm}\label{thrm:WeakBoundedness}
 Let $f\in\mathbb{Q}(j,\chi^*)$ be nonconstant.  Then there is a number $M\in\mathbb{N}$ such that, for any quadratic $\tau\in\uh$, we have
 \[[\mathbb{Q}(j(\tau)):\mathbb{Q}(f(\tau))]\leq M.\]
\end{thrm}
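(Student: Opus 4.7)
The plan is to reduce Theorem \ref{thrm:WeakBoundedness} to a uniform finiteness statement for the fibres of a definable family in an o-minimal structure, and then to invoke the standard result that definable families of finite sets have uniformly bounded cardinality.

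First I would carry out a Galois-theoretic reduction. Masser's theorem gives $\chi^*(\tau)\in\mathbb{Q}(j(\tau))$, hence $f(\tau)\in\mathbb{Q}(j(\tau))$, so the degree in question is well defined. Classical CM theory identifies the Galois conjugates of $j(\tau)$ as $\{j(\tau_Q):Q\in P^1_\delta\}$, and the corresponding reciprocity for AHM functions (which follows from Masser's containment) shows that $\sigma(f(\tau))=f(\tau_Q)$ for the matching $Q$. Transitivity of Galois then gives
\[[\mathbb{Q}(j(\tau)):\mathbb{Q}(f(\tau))]=\#\{Q\in P^1_\delta:f(\tau_Q)=f(\tau)\}.\]
Since $f$ is $\slz$-invariant, I may assume $\tau$ lies in the standard fundamental domain $\mathcal{F}$, and it therefore suffices to bound, uniformly in $\tau\in\mathcal{F}$, the cardinality of $\{\tau'\in\mathcal{F}:f(\tau')=f(\tau)\}$.

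Next I would set up the o-minimal framework. Writing $\chi^*=A-B/\Imm\tau$ with $A,B$ real-analytic on $\uh$, and switching to the coordinate $q=e^{2\pi i\tau}$ to tame the cusp, the set
\[X=\{(\tau,\tau')\in\mathcal{F}^2:f(\tau)=f(\tau')\}\]
becomes definable in the o-minimal structure $\mathbb{R}_{\mathrm{an}}$ (equivalently, in $\mathbb{R}_{\mathrm{an},\exp}$ working directly in $\tau$). Viewing $X$ as a definable family with parameter $\tau$, the uniform finiteness theorem for o-minimal structures will yield a uniform bound $M=M(f)$ on $\#X_\tau$, provided every fibre $X_\tau$ is finite.

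Establishing finiteness of the individual fibres is the main obstacle. Heuristically, since $f$ takes complex values and $\mathcal{F}$ has real dimension two, $f(\tau')=f(\tau)$ imposes two real conditions and should cut out a discrete set. To make this rigorous I would argue that if a fibre contained a real-analytic arc, then along that arc $f$ would be forced into an algebraic relation with $\Imm\tau$ that contradicts the nonconstancy of $f\in\mathbb{C}(j,\chi^*)$, using the algebraic independence of $j$, the relevant Eisenstein series, and $1/\Imm\tau$ over $\mathbb{C}$. Once finiteness of each fibre is secured, the o-minimal uniform bound and the Galois reduction finish the proof; the promised dependence of $M$ on the ``degree'' of $f$ will come from tracking the algebraic complexity of the definable family $X$ through a cell decomposition.
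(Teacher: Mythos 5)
Your Galois reduction is essentially the paper's (Proposition \ref{propn:GaloisControl} and Corollary \ref{cor:ClassPolysPowerOfIrreducible} give exactly $[\mathbb{Q}(j(\tau)):\mathbb{Q}(f(\tau))]=k_\delta=\#\{Q:f(\tau_Q)=f(\tau)\}$), and the definability of $f$ on the fundamental domain is also how the paper starts. The genuine gap is the step you yourself flag as the main obstacle: finiteness of every fibre $\{\tau'\in\mathbb{F}:f(\tau')=f(\tau)\}$. Your proposed justification does not work. If $f$ is constant along a real-\emph{analytic} arc, this yields an algebraic relation between $j$, $\chi^*$ and the constant value \emph{only along that arc}; it produces no global relation among $j$, the Eisenstein series and $1/\Imm\tau$ on $\uh$, so algebraic independence of these functions gives no contradiction. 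The only functional-transcendence input available (the Ax--Lindemann-type theorem used in Corollary \ref{cor:algArcsForAllFs}) rules out $f$ being constant on real \emph{algebraic} arcs, not real analytic ones, and the dimension heuristic (``two real conditions cut out a discrete set'') fails for real-analytic maps $\mathbb{R}^2\to\mathbb{R}^2$: the Jacobian lemma (Lemma \ref{lma:Jacobians}) only shows $J_f\not\equiv 0$, which still allows $J_f$ to vanish along curves on which $f$ could a priori be constant. So ``every fibre is finite'' is an unproven hypothesis, and your appeal to uniform finiteness for definable families collapses without it.

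The paper's proof is structured precisely to avoid needing that hypothesis. It isolates the definable set $Z\subseteq\mathbb{F}$ of points whose fibre is $1$-dimensional, gets a uniform bound $m$ on fibres outside $Z$ by o-minimal uniform finiteness, and then must show $Z$ contains only finitely many quadratic points. That is done by a Pila--Wilkie versus Galois tension: $Z$ contains no real algebraic arc (Corollary \ref{cor:algArcsForAllFs}), so Pila--Wilkie bounds the quadratic points of height at most $T$ in $Z$ by $c_\epsilon T^\epsilon$; on the other hand, the Siegel class-number bound plus Corollary \ref{cor:ClassPolysPowerOfIrreducible} shows that a quadratic point of discriminant $\delta$ in $Z$ forces at least $k_\delta\geq cD^{1/4}/[\mathbb{Q}(z):\mathbb{Q}]$ conjugate quadratic points of the same discriminant into $Z$ (their common $f$-value $z$ ranges over the finite set $f(Z)$, so $[\mathbb{Q}(z):\mathbb{Q}]$ is bounded), which contradicts the Pila--Wilkie bound for large $D$. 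This counting mechanism is the heart of the argument and is entirely absent from your proposal; to repair your write-up you would either have to prove fibre finiteness outright (not known, and not provable by the independence argument you sketch) or import something equivalent to the paper's Pila--Wilkie/Siegel step to control the quadratic points lying in infinite fibres.
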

This theorem may not be entirely new.  The number-theoretic techniques of Mertens and Rolen, while not applied to $\chi^*$ specifically, implicitly involves finding such bounds; their work could likely be extended fairly easily to get something like \ref{thrm:WeakBoundedness}.  Our main theorem, however, does seem to be new.  Using the Andr\'e-Oort theorem from \cite{Spence2016} and uniformity ideas of Scanlon \cite{Scanlon2004}, we are able to get a uniform version of \ref{thrm:WeakBoundedness}; this uniformity does not seem to be attainable with the methods of Mertens of Rolen. 
 \begin{defn}
  For a field $F$ and natural numbers $d$ and $n$, define
  \[F^{\leq d}(X_1,\dots,X_n)\]
  to be the set of rational functions in the $X_i$, with coefficients in $F$, which may be written as $f/g$, for polynomials $f$ and $g$ of degree at most $d$.
 \end{defn}
 With this definition, we can state the uniform result which is our main theorem for this note.
\begin{thrm}\label{thrm:BoundednessOfExtension}
 For each natural number $d$, there is a constant $M_d$ such that, whenever $f\in \mathbb{Q}^{\leq d}(j,\chi^*)$ is nonconstant and $\tau\in\uh$ is quadratic, we have
 \[[\mathbb{Q}(j(\tau)):\mathbb{Q}(f(\tau))]\leq M_d.\]
\end{thrm}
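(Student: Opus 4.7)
The plan is to reduce the statement to counting the fibers of $f$ over the CM Galois orbit $\{\tau_Q : Q\in P^1_\delta\}$, and then apply uniform o-minimal methods in the spirit of Scanlon to handle the parameter $R$. For the reduction, Masser's inclusion $\chi^*(\tau) \in \mathbb{Q}(j(\tau))$ and Shimura reciprocity together send $(j(\tau),\chi^*(\tau))$ to $(j(\tau_Q),\chi^*(\tau_Q))$ under the Galois action, so the conjugates of $f(\tau)=R(j(\tau),\chi^*(\tau))$ are exactly the values $f(\tau_Q)$. A standard Galois-theoretic count then gives
\[[\mathbb{Q}(j(\tau)):\mathbb{Q}(f(\tau))] = \#\{Q \in P^1_\delta : f(\tau_Q)=f(\tau)\},\]
and it suffices to bound the right-hand side by a constant $M_d$ depending only on $d$, uniformly in $\delta$, $v\in\mathbb{C}$ and $R\in\mathbb{Q}^{\leq d}(x,y)$.

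Next I would exhibit these level sets as a uniformly definable family.  Writing $\chi^*$ as the sum of a holomorphic function on $\uh$ and $1/(\pi\Imm\tau)$ times another such function, the restrictions of $j$ and $\chi^*$ to the standard fundamental domain $\mathcal{F}$ are definable in $\mathbb{R}_{\mathrm{an},\exp}$.  Hence the family
\[X_{R,v} = \{\tau \in \mathcal{F} : R(j(\tau),\chi^*(\tau)) = v\}\]
is definable as $(R,v)$ varies over the coefficient space for $\mathbb{Q}^{\leq d}(x,y)$ crossed with $\mathbb{C}$.  For generic $(R,v)$ with $R(j,\chi^*)$ nonconstant, $X_{R,v}$ is zero-dimensional; the standard o-minimal uniformity for the number of connected components in a definable family then bounds $|X_{R,v}|$ by a constant depending only on $d$, which handles generic $v$.

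For the exceptional $v$ where $X_{R,v}$ contains a positive-dimensional component $Y$, I would invoke the André--Oort theorem for $\chi^*$ proved in \cite{Spence2016}: any such $Y$ capturing infinitely many CM points across varying discriminants must be a special subvariety.  Following the parametric uniformity strategy of \cite{Scanlon2004}, one shows that the special subvarieties arising this way are themselves definably parameterized by $(R,v)$ within the family, and that the number of $\tau_Q$ of fixed discriminant $\delta$ lying on any single such component is bounded purely in terms of $d$.  Combining this with the generic case gives the constant $M_d$.

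The delicate step, and the main obstacle, is this last uniformity: classifying, in terms of $d$, which special subvarieties for $(j,\chi^*)$ can appear as positive-dimensional level components of bounded-degree rational combinations, and then ruling out that any such component absorbs an unbounded number of elements of $P^1_\delta$ as $\delta$ grows.  This will require combining the classification of special subvarieties from \cite{Spence2016} with a degree-counting argument controlling which Hecke-type correspondences a single $R\in\mathbb{Q}^{\leq d}(x,y)$ can encode; this is precisely the point at which uniformity beyond Theorem \ref{thrm:WeakBoundedness} becomes subtle.
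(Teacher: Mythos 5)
Your Galois reduction at the start is exactly the paper's (via Corollary \ref{cor:ClassPolysPowerOfIrreducible}, $[\mathbb{Q}(j(\tau)):\mathbb{Q}(f(\tau))]=k_\delta$ equals the number of $Q\in P^1_\delta$ with $f(\tau_Q)=f(\tau)$), and your list of ingredients (definability, the Andr\'e--Oort theorem of \cite{Spence2016}, Scanlon's uniformity) is the right one. But the proof is not complete: the step you yourself flag as ``the delicate step, and the main obstacle'' is precisely the content of the theorem, and your sketch does not close it. Splitting into generic $(R,v)$ (finite fibre, bounded by uniform finiteness in a definable family) and exceptional $(R,v)$ is fine, but for the exceptional case you propose to apply Andr\'e--Oort to a positive-dimensional real-analytic component $Y\suq\mathbb{F}$ of the level set; the Andr\'e--Oort theorem of \cite{Spence2016} applies to algebraic subvarieties of $\mathbb{C}^{2n}$, not to real-analytic arcs in $\uh$, and even after passing to the algebraic level curve $\{R(X,Y)=v\}\suq\mathbb{C}^2$, applying Andr\'e--Oort fibre-by-fibre only yields finiteness for each fixed $(R,v)$ --- that is Lemma \ref{lma:CountingQuadraticPreimages} and Theorem \ref{thrm:WeakBoundedness} again --- with no bound depending only on $d$. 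A ``classification of which special subvarieties can occur as level components together with degree counting'', as you suggest, is not carried out and is not how the uniformity is actually obtained.

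The paper closes this gap differently: it never analyses the real fibres in $\mathbb{F}$ at all in the uniform step. Instead it forms the single parametrised algebraic family $V\suq\mathbb{C}^{2+n(d)+1}$, $V_{b,z}=\{(X,Y):f_b(X,Y)=z\}$, and applies Scanlon's specialisation lemma (Theorem \ref{lma:UniformConstructibleOverQ}) with $A$ the set of $(j,\chi^*)$-special points of $\mathbb{C}^2$. This replaces the arbitrary parameter $(b,z)$ by a $(j,\chi^*)$-special parameter $a\in\mathbb{C}^{2N}$ for a fixed constructible $Z\suq\mathbb{C}^{2+2N}$, so that Andr\'e--Oort (Theorem \ref{thrm:AOforPi}) need only be applied \emph{once}, to the finitely many varieties $Z_i$ presenting $Z$, producing finitely many maximal special subvarieties; the structural facts about special subvarieties (fibres over special points are finite unions of special subvarieties of $\mathbb{C}^2$, i.e.\ points or $\mathbb{C}^2$, with component count independent of the base point) then give a bound $M_d$ independent of $a$, and the degenerate components whose fibres are all of $\mathbb{C}^2$ are discarded using the already-proved non-uniform result. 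Your proposal never invokes the non-uniform theorem in this role and never performs the transfer of the parameter to a special point, which is the mechanism that makes the bound depend only on $d$; as written, the argument establishes only the non-uniform statement plus a programme for the uniform one.
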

A remark on this: it can be viewed as a form of independence result, or orthogonality result.  It says, essentially, that the special points of $\chi^*$ and the special points of $j$, though they lie in the same fields and have similar degrees over $\mathbb{Q}$, cannot be ``globally dependent'' on one another.  A further remark we must also make is that the bound $M_d$ relies on the Siegel bound for class numbers, and hence is certainly not effective.  

As discussed, we will be proving this using methods of o-minimality, in particular the Pila-Wilkie Theorem; the unfamiliar reader can see \cite{Dries1998}, \cite{Pil} and \cite{Pila2011} for details about these techniques.  In the next section, we compile some technical lemmas and propositions which will be necessary for the proof.  These are not particularly o-minimal in nature; the Pila-Wilkie arguments appear in Section \ref{sect:conc}.

\bigskip\noindent
\textbf{Acknowledgements.} Although this is not a long paper, it owes much to several people.  I would like to thank my supervisor Jonathan Pila, who unfailingly provides invaluable guidance and support.  I would also like to thank Harry Schmidt for several productive conversations on the topics of this paper and for setting me down this route of investigation in the first place.  Finally I would like to thank my father Derek for our many discussions and his keen proof-reading eyes!


\section{Technicalities}
Out first technical results are quite number-theoretic in nature, of a similar feel to results from the work of Mertens and Rolen \cite[Proposition 3.2]{Mertens2015}.  These were also proven independently by myself in \cite{Spence2016}, using the work of Masser \cite{Masser1975}.
\begin{propn}\label{propn:GaloisControl}
 Let $\tau\in\uh$ be a quadratic point.  Let $\sigma$ be a Galois automorphism acting on (the splitting field of) $\mathbb{Q}(j(\tau))$ over $\mathbb{Q}$.  Let $\tau'\in\uh$ be quadratic such that $\sigma(j(\tau))=j(\tau')$.  Then $\sigma(\chi^*(\tau))=\chi^*(\tau')$. 
 \begin{proof}
  See \cite[Proposition 5.2]{Spence2016}.
 \end{proof}
\end{propn}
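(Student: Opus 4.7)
The plan is to use the complex-multiplication theory of $\chi^*$ in conjunction with Shimura's reciprocity law. First I would rewrite the defining formula, using $E_4^3 - E_6^2 = 1728\Delta$, as
\[\chi^*(\tau) = E_2^*(\tau) \cdot \frac{E_4(\tau) E_6(\tau)}{\Delta(\tau)},\]
presenting $\chi^*$ as a product of a weight-$2$ almost holomorphic factor and a weight-$(-2)$ meromorphic holomorphic factor; the total weight is $0$, as it must be for a modular function.

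Next I would invoke the algebraic theory of modular forms at CM points, in the spirit of Katz and Shimura. For quadratic $\tau$, attach the CM elliptic curve $E_\tau = \mathbb{C}/(\mathbb{Z}\tau+\mathbb{Z})$ together with a complex period $\Omega$ pairing it with an algebraic invariant differential defined over $\bar{\mathbb{Q}}$. For any classical meromorphic modular form $g$ of weight $k$ defined over $\mathbb{Q}$, the scaled value $g(\tau)\Omega^k$ is an algebraic number in the ring class field of the order of $\tau$, and Shimura reciprocity describes its Galois conjugates: if $\sigma(j(\tau)) = j(\tau')$ then $\sigma$ carries the pair $(E_\tau,\Omega)$ to an isomorphic copy of $(E_{\tau'},\Omega')$, and hence $\sigma(g(\tau)\Omega^k) = g(\tau')\Omega'^k$. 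Applied to $E_4$, $E_6$, $\Delta$ this is immediate. For the almost holomorphic factor $E_2^*$, the corresponding statement --- that $E_2^*(\tau)\Omega^2$ is algebraic with the analogous Galois transformation --- is essentially the content of Masser's work in \cite{Masser1975}, which exploits the CM relation between the holomorphic period and the quasi-period.

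Assembling: write
\[\chi^*(\tau) = \bigl(E_2^*(\tau)\Omega^2\bigr)\cdot\bigl(E_4(\tau)\Omega^4\bigr)\bigl(E_6(\tau)\Omega^6\bigr)\big/\bigl(\Delta(\tau)\Omega^{12}\bigr),\]
a ratio of algebraic numbers in which the total power of $\Omega$ is $2+4+6-12=0$, so $\Omega$ disappears. Applying $\sigma$ factor by factor using the reciprocity transformation above, one obtains the same expression with $\tau$ replaced by $\tau'$ and $\Omega$ by $\Omega'$; the $\Omega'$ powers again cancel, yielding $\sigma(\chi^*(\tau)) = \chi^*(\tau')$.

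The main obstacle is the justification for the almost holomorphic factor $E_2^*$: the basic Katz-type algebraic-modular-form theorem does not apply directly, and one must use that for a CM elliptic curve the Gauss-Manin connection and the CM action together pin down the quasi-period relative to the holomorphic period, and that this pinning is preserved under the Galois action on $E_\tau$. This is precisely what Masser established; with it in hand, the remainder of the argument is a routine, term-by-term Shimura-reciprocity calculation, and the cancellation of $\Omega$ in the weight-$0$ product of factors is what makes the identity $\sigma(\chi^*(\tau))=\chi^*(\tau')$ fall out cleanly.
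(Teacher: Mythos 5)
The paper does not actually reprove this proposition: its ``proof'' is a citation to Proposition 5.2 of \cite{Spence2016}, which is itself established using Masser's results, so the CM-theoretic strategy you outline is in spirit the route the cited proof takes. Your reduction is also sound in its easy parts: writing $\chi^*=E_2^*E_4E_6/\Delta$ and cancelling periods in a weight-$0$ product is fine (the normalising powers of $2\pi i$ that one must insert to make the scaled values algebraic also cancel), and for the holomorphic factors one does not even need Shimura reciprocity --- any weight-$0$, level-$1$ holomorphic combination of $E_4,E_6,\Delta$ with rational $q$-coefficients is a rational function of $j$ over $\mathbb{Q}$, so its $\sigma$-equivariance is immediate from $\sigma(j(\tau))=j(\tau')$.

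The genuine gap is at the step you yourself call the main obstacle and then dispose of by attribution. What Masser \cite{Masser1975} proves is algebraicity and the field containment $\mathbb{Q}(\chi^*(\tau))\suq\mathbb{Q}(j(\tau))$ (this is exactly how the present paper quotes him); the conjugation formula $\sigma(\chi^*(\tau))=\chi^*(\tau')$ --- equivalently, the $\sigma$-equivariance of the quasi-period factor, say of $E_2^*(\tau)E_4(\tau)/E_6(\tau)$ --- is not in \cite{Masser1975} in that form, and it is precisely the content of the proposition being proved; it is what \cite{Spence2016} (and, in their level-6 setting, Mertens--Rolen \cite{Mertens2015}) have to supply on top of Masser. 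Relatedly, the sentence ``$\sigma$ carries the pair $(E_\tau,\Omega)$ to an isomorphic copy of $(E_{\tau'},\Omega')$'' cannot be taken literally: $\sigma$ is an abstract field automorphism and $\Omega$ and the quasi-period are transcendental, so the statement must be routed through algebraic data --- e.g.\ the algebraic de Rham cohomology $H^1_{\mathrm{dR}}(E_\tau)$ with its CM splitting, which is defined over the ring class field and hence respected by $\sigma$ --- and only afterwards translated back into the analytic quantities $E_2^*$, $\Omega$, and the quasi-period. As written, your argument assumes at the decisive point essentially what is to be proven; to close it you would need either to carry out that de Rham/CM-splitting argument in detail or to cite a source where the equivariant statement itself (not merely algebraicity) is established, such as \cite{Spence2016}.
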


\begin{cor}\label{cor:ClassPolysPowerOfIrreducible}
 For any nonconstant $f\in\mathbb{Q}(j,\chi^*)$, the class polynomial of $f$,
 \[H^f_\delta(x)=\prod_{Q\in P_\delta}\left(x-f(\tau_Q)\right),\]
 (which is defined over $\mathbb{Q}$ for such $f$) decomposes over $\mathbb{Q}$ as a power of an irreducible polynomial.  That is,
 \[H^f_\delta=p^{k_\delta}\]
 for some natural number $k_\delta$ and some polynomial $p\in\mathbb{Q}[X]$, irreducible over $\mathbb{Q}$.  
 \begin{proof}
  Follows from the previous proposition by elementary Galois theory.
 \end{proof}
\end{cor}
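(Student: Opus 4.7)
The plan is to show that the Galois group of a suitable splitting field acts transitively on the multiset of roots $\{f(\tau_Q):Q\in P_\delta\}$ of $H^f_\delta$, from which the conclusion that $H^f_\delta$ is a power of an irreducible polynomial follows by a standard argument.

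First I would recall the classical fact that for fixed discriminant $\delta$ the set $\{j(\tau_Q):Q\in P_\delta\}$ forms a single Galois orbit over $\mathbb{Q}$; equivalently, the Hilbert class polynomial $\prod_Q(X-j(\tau_Q))$ is irreducible. Let $L$ denote the splitting field of $H^f_\delta\cdot\prod_Q(X-j(\tau_Q))$ over $\mathbb{Q}$, and write $f=R(j,\chi^*)$ for a rational function $R$ with coefficients in $\mathbb{Q}$. For any $\sigma\in\operatorname{Gal}(L/\mathbb{Q})$ and any $Q\in P_\delta$, transitivity on $j$-values gives $\sigma(j(\tau_Q))=j(\tau_{Q'})$ for some $Q'\in P_\delta$. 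Proposition \ref{propn:GaloisControl} then yields $\sigma(\chi^*(\tau_Q))=\chi^*(\tau_{Q'})$, and since $R$ has rational coefficients we obtain
\[\sigma(f(\tau_Q))=R(\sigma(j(\tau_Q)),\sigma(\chi^*(\tau_Q)))=R(j(\tau_{Q'}),\chi^*(\tau_{Q'}))=f(\tau_{Q'}).\]

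Thus $\operatorname{Gal}(L/\mathbb{Q})$ permutes the multiset of roots of $H^f_\delta$, and its action on the underlying set of \emph{distinct} roots is transitive (since it is already transitive at the level of $j$-values). Letting $p\in\mathbb{Q}[X]$ be the minimal polynomial over $\mathbb{Q}$ of any single root $f(\tau_Q)$, the polynomial $p$ is irreducible and its root set equals the set of distinct roots of $H^f_\delta$; moreover, any two Galois conjugate roots occur with the same multiplicity in $H^f_\delta$ because $\sigma$ permutes the full multiset. Hence $H^f_\delta=p^{k_\delta}$ for some $k_\delta\in\mathbb{N}$, as required.

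The only step requiring mild care is the equality of multiplicities: one must note that if $f(\tau_{Q_1})=f(\tau_{Q_2})$ and $\sigma$ sends $j(\tau_{Q_i})$ to $j(\tau_{Q_i'})$, then $f(\tau_{Q_1'})=\sigma(f(\tau_{Q_1}))=\sigma(f(\tau_{Q_2}))=f(\tau_{Q_2'})$, so the Galois action is well-defined on the multiset. No genuine obstacle arises, which matches the author's description of the argument as ``elementary Galois theory''; the only nontrivial input is the transitivity statement provided by Proposition \ref{propn:GaloisControl}.
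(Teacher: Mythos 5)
Your proposal is correct and is exactly the argument the paper compresses into ``follows from the previous proposition by elementary Galois theory'': transitivity of the Galois action on the $j(\tau_Q)$ (irreducibility of the Hilbert class polynomial), transferred to the values $f(\tau_Q)$ via Proposition \ref{propn:GaloisControl}, with the induced permutation $Q\mapsto Q'$ giving equal multiplicities of conjugate roots. Nothing essential differs from the intended proof; you have simply written out the details.
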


The next few results we will need are of a rather different feel; somewhat more geometric in nature. They are motivated by ideas from model theory, specifically the now-standard Pila-Zannier strategy to proving diophantine results using o-minimality.  The familiar reader may recognise the first result as an Ax-Lindemann-type result for $j$ and $\chi^*$.  It was proven in \cite{Spence2016} as a main ingredient in the proof of an Andr\'e-Oort result for $j$ and $\chi^*$.
\begin{thrm} 
 Let $\pi$ be the map from $\uh^n$ to $\mathbb{C}^{2n}$ defined by
 \[(\tau_1,\dots,\tau_n)\mapsto(j(\tau_1),\chi^*(\tau_1),\dots,j(\tau_n),\chi^*(\tau_n)).\]
 Let $S$ be an arc of a real algebraic curve in $\uh^n$ and suppose that $S\suq \pi^{-1}(V)$, where $V$ is some irreducible variety in $\mathbb{C}^{2n}$.  Then $S$ is contained in a weakly $\uh$-special variety $G$ with $G\suq \pi^{-1}(V)$.
\end{thrm}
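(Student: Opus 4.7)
The plan is to follow the Pila--Zannier strategy, in the spirit of Pila and Tsimerman's modular Ax--Lindemann theorem, adapted to the enriched uniformisation $\pi$. My first step is to show that $\pi$ restricted to a product of copies of the standard $\slz$-fundamental domain $\mathbb{F}\suq\uh$ is definable in the o-minimal structure $\mathbb{R}_{\mathrm{an},\exp}$: the $q$-expansions of $j$ and of the holomorphic part of $\chi^*$ converge uniformly on $\mathbb{F}$ and hence define $\mathbb{R}_{\mathrm{an}}$-functions there, while the nonholomorphic correction $-3/(\pi\Imm\tau)$ coming from $E_2^*$ is semi-algebraic. Consequently the set $D:=\pi^{-1}(V)\cap\mathbb{F}^n$ is definable in $\mathbb{R}_{\mathrm{an},\exp}$.

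Let $U$ be a complex algebraic subvariety of $\uh^n$ of maximal dimension containing $S$ and still contained in $\pi^{-1}(V)$; it suffices to show $U$ is weakly $\uh$-special, for then we may take $G=U$. To this end I would introduce the definable ``stabiliser'' set
\[ \Sigma = \{g\in\slr^n : g\cdot(U\cap\mathbb{F}^n)\suq \pi^{-1}(V)\}. \]
By the $\slz^n$-equivariance of $\pi$, every $\gamma\in\slz^n$ such that $\gamma\cdot(U\cap\mathbb{F}^n)$ returns to $\mathbb{F}^n$ lies in $\Sigma$. Estimating the hyperbolic volume of $U\cap\mathbb{F}^n$ and counting fundamental-domain translates in the standard way gives polynomially many such $\gamma$ of height at most $T$, far exceeding the $O(T^\epsilon)$ bound allowed for the transcendental part of a definable set by the Pila--Wilkie theorem. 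Hence $\Sigma$ must contain a connected, positive-dimensional real semi-algebraic arc $C$ through the identity of $\slr^n$.

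Differentiating along $C$ produces a positive-dimensional real algebraic subgroup $H\suq\slr^n$ whose action on $U$ remains inside $\pi^{-1}(V)$. Passing to the complexification and invoking the classification of connected algebraic subgroups of $\operatorname{SL}_2(\mathbb{C})^n$, together with the resulting block decomposition of the orbit $H\cdot U$, identifies this orbit as a weakly $\uh$-special subvariety of $\uh^n$. By the maximality of $U$ we then have $H\cdot U=U$, and $U$ is itself weakly $\uh$-special.

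The main obstacle is the passage from the semi-algebraic arc $C\suq\Sigma$ delivered by Pila--Wilkie to a genuine positive-dimensional enlargement of $U$ inside $\pi^{-1}(V)$, rather than merely a new algebraic slice of the preimage tangent to $U$. This is the familiar monodromy step of Ax--Lindemann-type proofs, and it is here, rather than in the o-minimal counting, that the geometry of $\uh^n$ does the real work. Pleasingly, the extra coordinate $\chi^*$ introduces no new difficulty: $\chi^*$ is quasi-modular and transforms under $\slz$ via an explicit cocycle, so $\slz^n$-equivariance of $\pi^{-1}(V)$ is automatic and the argument is structurally identical to the pure-$j$ modular Ax--Lindemann, just with heavier notation.
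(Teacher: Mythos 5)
The paper itself does not reprove this theorem---it is quoted from \cite{Spence2016}---so your argument has to stand on its own, and it has a genuine gap at its very first reduction. You posit a complex algebraic subvariety $U$ of maximal dimension with $S\suq U\suq\pi^{-1}(V)$. In the classical modular Ax--Lindemann setting this step is harmless: there $\pi$ is holomorphic, so $\pi^{-1}(V)$ is a complex analytic subset of $\uh^n$, and the complex Zariski closure of the real arc $S$ meets it in a complex analytic subset which, containing an arc, must contain a whole complex curve; hence some positive-dimensional complex algebraic $U$ inside the preimage always exists. Here that is exactly what fails: $\chi^*$ is \emph{not} holomorphic, so $\pi^{-1}(V)$ is merely a real-analytic (definable) set, and there is no a priori reason that any complex algebraic variety through $S$ stays inside it. For instance, with $n=1$ and $V=\{Y=c\}$ the set $\pi^{-1}(V)=\{\tau:\chi^*(\tau)=c\}$ has real dimension at most $1$ and so contains no complex curve at all; in general the passage from a real algebraic arc in $\pi^{-1}(V)$ to a complex algebraic (indeed weakly special) variety in $\pi^{-1}(V)$ is essentially the content of the theorem, so your opening move assumes a large part of what is to be proved. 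The same missing complex-analyticity undermines the later steps that silently use it: the hyperbolic-volume/height-versus-distance count needs a positive-dimensional \emph{complex} $U$ escaping to the boundary (a real arc can sit inside finitely many fundamental domains, and then there is no polynomially growing supply of $\gamma$'s), and both the continuation of $H\cdot U$ inside $\pi^{-1}(V)$ and the final complexification-plus-maximality argument rest on analytic continuation inside a complex analytic preimage. Your closing remark that the coordinate $\chi^*$ ``introduces no new difficulty'' is therefore exactly backwards: the nonholomorphy of $\chi^*$ is the new difficulty, and it is why this statement required a separate treatment in \cite{Spence2016} rather than a routine rerun of the pure-$j$ argument.

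Two further, more local problems with the counting step. As written, $\Sigma=\{g\in\slr^n:g\cdot(U\cap\mathbb{F}^n)\suq\pi^{-1}(V)\}$ is not definable, because the full preimage $\pi^{-1}(V)$ is a countable union of $\slz^n$-translates of the definable piece $\pi^{-1}(V)\cap\mathbb{F}^n$; moreover, with this definition every $\gamma\in\slz^n$ lies in $\Sigma$ trivially (since $U\suq\pi^{-1}(V)$ and the preimage is $\slz^n$-stable), so the abundance of integer points carries no information. The standard fix is to define $\Sigma$ by a condition of the form $\dim\bigl(gU\cap\pi^{-1}(V)\cap\mathbb{F}^n\bigr)=\dim U$, and then the nontrivial work is precisely the lower bound on the number of qualifying $\gamma$ of height at most $T$---which again requires the complex algebraicity of $U$ that you have not established. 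Finally, a small correction: $\chi^*$ is itself $\slz$-invariant (it has weight $2+4+6-12=0$); it is its holomorphic part $E_2E_4E_6/(E_4^3-E_6^2)$ that is quasimodular and transforms with a cocycle, so the invariance of $\pi^{-1}(V)$ is indeed automatic, but not for the reason you give.
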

Here, a ``weakly $\uh$-special'' variety is any subset of $\uh^n$ cut out by equations either of the form
\begin{itemize}
 \item $\tau_i=g\tau_j$, with $g\in\gl$, or
 \item $\tau_i=c$, with $c\in\uh$.
\end{itemize}
All that is needed for our purposes is the following fact: the only proper weakly $\uh$-special subvarieties of $\uh$ are points.
\begin{cor}\label{cor:algArcsForAllFs}
 Let $f\in\mathbb{C}(j,\chi^*)$ be nonconstant.  There does not exist a real algebraic arc $S\suq\uh$ upon which $f$ is constant.
 \begin{proof}
  If $f$ were constant on a real algebraic arc $S$, we could then find a polynomial $p$ in two variables, nontrivial, such that $p(j(\tau),\chi^*(\tau))=0$.  By the previous theorem, we would get a weakly $\uh$-special variety $G$ containing $S$ such that $p(j(\tau),\chi^*(\tau))=0$ for all $\tau\in G$.  The variety $G$ is necessarily positive-dimensional, whence $G=\uh$.  Hence $p(j(\tau),\chi^*(\tau))=0$ for all $\tau\in\uh$, so $p$ must be trivial since $j$ is holomorphic but $\chi^*$ is not.  Contradiction.
 \end{proof}
\end{cor}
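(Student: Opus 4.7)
The plan is to package the hypothesis into a polynomial vanishing statement and then apply the Ax-Lindemann-type theorem stated immediately above. If $f = p/q$ with $p, q \in \mathbb{C}[X,Y]$ is constant with value $c$ on the arc $S$, then the polynomial $P := p - cq \in \mathbb{C}[X,Y]$ is nonzero (since $f$ is nonconstant) and satisfies $P(j(\tau), \chi^*(\tau)) = 0$ on $S$. Writing $\pi$ for the $n=1$ case of the map from the previous theorem, this says exactly that $S \subseteq \pi^{-1}(V)$, where $V \subseteq \mathbb{C}^2$ is the hypersurface cut out by $P$.

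Next, I would feed this containment into the previous theorem, extracting a weakly $\uh$-special variety $G$ with $S \subseteq G \subseteq \pi^{-1}(V)$. Because $S$ is an arc, and in particular infinite, $G$ cannot be a single point, so by the explicit description of weakly $\uh$-special subvarieties of $\uh$ recorded after the theorem (the only alternative to a point is all of $\uh$) we are forced to have $G = \uh$. Thus in fact $P(j(\tau),\chi^*(\tau)) = 0$ holds identically on the whole upper half-plane.

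It remains to rule out such a nontrivial algebraic relation between $j$ and $\chi^*$. Regarding $P$ as a nonzero polynomial in $Y$ over $\mathbb{C}[X]$, the identity exhibits $\chi^*$ as algebraic over $\mathbb{C}(j)$; but $\mathbb{C}(j)$ is a field of meromorphic functions on $\uh$, and an algebraic extension inside the functions on $\uh$ stays meromorphic, whereas $\chi^*$ is non-holomorphic because of the $-\tfrac{3}{\pi\Imm\tau}$ term appearing in its definition. The only step requiring any real care is this last non-holomorphy argument; a concrete backup is to decompose $\chi^* = F + G/\Imm\tau$ with $F,G$ holomorphic, expand $P(j,\chi^*)$ as a polynomial in $1/\Imm\tau$ with holomorphic coefficients, and conclude by successively comparing the coefficients of powers of $1/\Imm\tau$ that $P$ must be the zero polynomial, delivering the desired contradiction.
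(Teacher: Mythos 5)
Your proposal is correct and follows essentially the same route as the paper: encode constancy on $S$ as a nontrivial polynomial relation $P(j,\chi^*)=0$ on $S$, invoke the Ax--Lindemann-type theorem to get a weakly $\uh$-special $G\supseteq S$ inside $\pi^{-1}(V)$, note $G$ is positive-dimensional hence $G=\uh$, and then contradict the algebraic independence of $j$ and $\chi^*$. Your closing coefficient-comparison in powers of $1/\Imm\tau$ is just a fleshed-out version of the paper's one-line appeal to ``$j$ is holomorphic but $\chi^*$ is not'' (and is the safer of your two suggested endings, since the bare ``algebraic over $\mathbb{C}(j)$ stays meromorphic'' claim needs exactly that kind of justification).
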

This most recent result will later give us some control over preimages of the form $f^{-1}(z)$, $z\in\mathbb{C}$.  On its own, however, it will not be enough; to give us further control over such preimages, we will need some information about the \emph{Jacobian} of $f$, which by definition is the map $J_f:\uh\to\mathbb{R}$,
\[J_f(x+iy)=\det\begin{pmatrix}\dfrac{\partial}{\partial x}\operatorname{Re}f(x+iy)&\dfrac{\partial}{\partial y}\operatorname{Re}f(x+iy)\\\dfrac{\partial}{\partial x}\Imm f(x+iy)&\dfrac{\partial}{\partial y}\Imm f(x+iy)
 \end{pmatrix}.\]
 It is well-known that if $f$ is constant on some positive-dimensional path, then $J_f$ will vanish on that path.  The result we need, therefore, is something to give us control over the zero set of $J_f$.
\begin{lma}\label{lma:Jacobians}
 Let $h\in\mathbb{R}(j,\chi^*)$ be nonconstant.  Then the Jacobian $J_h$ of $h$ is not identically zero.
 \begin{proof}
  Such an $h$ may be written as
  \[h(\tau)=\dfrac{f}{g}=\dfrac{\sum_k f_k(\Imm\tau)^{-k}}{\sum_k g_k(\Imm\tau)^{-k}},\]
  for some holomorphic functions $f_k, g_k$ having $q$-expansions with real coefficients.  A tedious manipulation then shows, for $\tau=iy$, that $J_h$ vanishes only if
  \begin{equation}\label{eqn:JacobianFirst}f\sum_k g_k'(\Imm\tau)^{-k}=g\sum_k f_k'(\Imm\tau)^{-k}\end{equation}
  or
  \begin{equation}\label{eqn:JacobianSecond}f\left(\sum_k g_k'(\Imm\tau)^{-k}-\sum_k kg_k(\Imm\tau)^{-k-1}\right)=g\left(\sum_k f_k'(\Imm\tau)^{-k}-\sum_k kf_k(\Imm\tau)^{-k-1}\right).\end{equation}
  By growth considerations, the coefficient of each power of $\Imm\tau$ must vanish individually.  In case (\ref{eqn:JacobianFirst}) we compare the $k=0$ terms to get
  \[f_0g_0'-g_0f_0'=0,\]
  whence $(f/g)'=0$ on $\tau=iy$ (and therefore everywhere), so $f_0=\lambda g_0$ for some constant $\lambda$.  By an isomorphism theorem from \cite{Zagier2008}, this implies $f=\lambda g$, which we are assuming is not the case.  
  
  The case (\ref{eqn:JacobianSecond}) is exactly the same by comparing the $k=0$ terms; the sums of the form $\sum kg_k(\Imm\tau)^{-k-1}$ contribute nothing to the $k=0$ term.
 \end{proof}
\end{lma}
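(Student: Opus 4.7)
The plan is to re-express the Jacobian using holomorphic and antiholomorphic derivatives. Using $\partial_x = \partial_\tau + \partial_{\bar\tau}$ and $\partial_y = i(\partial_\tau - \partial_{\bar\tau})$, a short computation yields the clean identity $J_h = |h_\tau|^2 - |h_{\bar\tau}|^2$, so the claim reduces to producing a point of $\uh$ at which $|h_\tau| \neq |h_{\bar\tau}|$.

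Next, I would exploit the explicit form $\chi^*(\tau) = \chi(\tau) - \tfrac{3\phi(\tau)}{\pi\Imm\tau}$, in which $\chi$ and $\phi$ are holomorphic functions of $\tau$ constructed from $E_2, E_4, E_6$. Substituting and clearing denominators, any nonconstant $h \in \mathbb{R}(j, \chi^*)$ has a representation $h = f/g$ with
\[
f = \sum_{k}f_k(\tau)\,(\Imm\tau)^{-k}, \qquad g = \sum_{k}g_k(\tau)\,(\Imm\tau)^{-k},
\]
where each $f_k, g_k$ is holomorphic in $\tau$ with real Fourier coefficients in $q = e^{2\pi i\tau}$. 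Since $j$ is holomorphic and $\bar\tau$-dependence in $h$ enters only via $(\Imm\tau)^{-1}$, and $\partial_{\bar\tau}((\Imm\tau)^{-1}) = -i/(2(\Imm\tau)^2)$, the derivative $h_{\bar\tau}$ is of order $O((\Imm\tau)^{-2})$, whereas $h_\tau$ has a leading $(\Imm\tau)^0$-coefficient equal to $(f_0/g_0)'(\tau)$.

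I would then specialise to the imaginary axis $\tau = iy$, on which $f_k(iy), g_k(iy) \in \mathbb{R}$. The asymptotic expansion of $|h_\tau|^2 - |h_{\bar\tau}|^2$ as $y \to \infty$ begins at order $y^0$ with $|(f_0/g_0)'(iy)|^2$, the $|h_{\bar\tau}|^2$ contribution being strictly subleading. Forcing this leading term to vanish gives $(f_0/g_0)'(iy) \equiv 0$, and by analytic continuation $(f_0/g_0)' \equiv 0$ on all of $\uh$. Hence $f_0 = \lambda g_0$ for some constant $\lambda \in \mathbb{C}$.

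Finally, I would upgrade the leading-order relation $f_0 = \lambda g_0$ to the full identity $f = \lambda g$ by invoking the structure theorem for the ring of almost holomorphic modular forms from Zagier's survey, which identifies the constant-term map in the $(\Imm\tau)^{-1}$-expansion as an injective ring homomorphism from AHM forms onto quasi-modular forms. Injectivity then gives $f - \lambda g \equiv 0$, so $h \equiv \lambda$, contradicting nonconstancy. The principal technical obstacle is this last step: one must justify that $f$ and $g$ sit inside the correct ring for Zagier's theorem to apply, rather than being merely formal Laurent expressions in $(\Imm\tau)^{-1}$; the rest of the argument is routine bookkeeping of powers of $\Imm\tau$ and a standard chain-rule calculation.
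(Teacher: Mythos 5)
Your route is in substance the same as the paper's: write $h=f/g$ with $f=\sum_k f_k(\Imm\tau)^{-k}$, $g=\sum_k g_k(\Imm\tau)^{-k}$, restrict to the imaginary axis, extract the $(\Imm\tau)^0$-coefficient to obtain $f_0'g_0-f_0g_0'\equiv 0$, hence $f_0=\lambda g_0$, and then promote this to $f=\lambda g$ via Zagier's isomorphism between almost holomorphic and quasimodular forms (your caveat that $f,g$ must lie in a ring where the constant-term map is injective is exactly the right point to check, and that injectivity is also what guarantees $g_0\not\equiv 0$, so that $(f_0/g_0)'$ makes sense). The identity $J_h=|h_\tau|^2-|h_{\bar\tau}|^2$ is a clean way to package the paper's ``tedious manipulation'', and it even lets you bypass the paper's two-case disjunction: after clearing the denominator $g^4$, the lowest-order coefficient in $(\Imm\tau)^{-1}$ is simply $|f_0'g_0-f_0g_0'|^2$.

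The step that does not hold up as written is the asymptotic one: you claim the expansion of $|h_\tau|^2-|h_{\bar\tau}|^2$ as $y\to\infty$ ``begins at order $y^0$ with $|(f_0/g_0)'(iy)|^2$, the $|h_{\bar\tau}|^2$ contribution being strictly subleading'', and you then force this leading term to vanish. But the coefficients $f_k,g_k$ are $q$-expansions which in general have poles at the cusp (already $j$ does), so $f_k(iy)$, $f_k'(iy)$ may grow or decay exponentially, and ordering by powers of $y$ alone does not decide which term dominates. Concretely, for $h=\chi^*/j$ one has $(f_0/g_0)'(iy)=O(e^{-2\pi y})$ while $h_{\bar\tau}(iy)$ decays only like $y^{-2}$, so the ``subleading'' term in fact dominates; and even where the leading term does dominate, identical vanishing of $J_h$ would only yield an asymptotic statement about $(f_0/g_0)'(iy)$, not $(f_0/g_0)'\equiv 0$. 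What is needed is the coefficient-separation fact the paper invokes as ``growth considerations'': a finite expression $\sum_m E_m(y)y^{-m}$ vanishing identically, with each $E_m$ a series in $e^{-2\pi y}$ having at most finitely many growing terms, forces every $E_m\equiv 0$, by linear independence of the functions $y^{-m}e^{2\pi ny}$ (equivalently, view the expression as a polynomial in $(\tau-\bar\tau)^{-1}$ and use that $J_h\equiv 0$ on an open set). With that lemma substituted for the naive asymptotic comparison, your argument goes through and is otherwise a slightly slicker organisation of the paper's proof.
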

With these technicalities out of the way, we can progress to the proof of \ref{thrm:WeakBoundedness}.
\section{Proving Theorem \ref{thrm:WeakBoundedness}}\label{sect:conc}
We will make significant use of techniques from model theory.  One crucial fact is the following: any $f\in\mathbb{C}(j,\chi^*)$ is \emph{definable} in $\mathbb{R}_{\text{an,exp}}$ when restricted to the standard fundamental domain
\[\mathbb{F}=\left\{\tau\in\uh:-\frac{1}{2}\leq\operatorname{Re}(\tau)\leq\frac{1}{2}\text{ and }|z|>1\right\}.\]
In particular, if $P$ is a definable property, sets of the form
\[Z=\{\tau\in\mathbb{F}:P(f(\tau))\text{ holds}\},\]
will always be definable.  Readers unfamiliar with definability in o-minimal structures can see \cite{Dries1998}, \cite{Pil} and \cite{Pila2011} for details. 

\bigskip
The bulk of the work towards proving \ref{thrm:WeakBoundedness} lies in the following lemma.
\begin{lma}\label{lma:CountingQuadraticPreimages}
   For each nonconstant $f\in\mathbb{Q}(j,\chi^*)$, there is a constant $M$ such that
   \[\#\{\tau\in\mathbb{F}:f(\tau)=z,\tau\text{ is quadratic.}\}\leq M\]
   for all $z\in\mathbb{C}$.
 \begin{proof}
  Consider the definable set
  \[Z=\Big\{\tau\in\mathbb{F}: \dim_{\mathbb{R}}\big(\mathbb{F}\cap f^{-1}\{f(\tau)\}\big)=1\Big\}.\]
  If $f$ is constant on a set of positive real dimension, then $J_f$ also vanishes on that set.  Hence
  \[Z\suq \{\tau\in\mathbb{F}:J_f(\tau)=0\}.\]
  By \ref{lma:Jacobians}, $J_f$ is not identically zero, so by analytic continuation the zero set of $J_f$ can have no interior.  Hence the definable set $Z$ must consist of just finitely many points and real analytic arcs.  (In particular, note that $f(Z)$ is finite; this will be useful later.)
  
  On $\mathbb{F}\setminus Z$, the function $f$ must be finite-to-one, by definability and the Cell Decomposition Theorem.  By uniform boundedness, there is $m\in\mathbb{N}$ such that, for all $\tau
  \in\uh$,
  \[\#\big(\mathbb{F}\cap f^{-1}\{f(\tau)\}\big)\leq m\text{ unless }\tau\in Z.\] 
%
  So it will be sufficient to prove the following.
  
  \bigskip  
  \emph{Claim.}  $Z$ contains only finitely many quadratic points.
  
  \emph{Proof of Claim.}
  By Corollary \ref{cor:algArcsForAllFs}, the set $Z$ can never contain an arc of a real algebraic curve.  So, by the Pila-Wilkie Theorem \cite{Pila2006}, we have (for all $\epsilon>0$) a constant $c_{\epsilon}$ such that
  \begin{equation}\label{eqn:PilaWilkieBound}\#\{\tau\in Z:\tau\text{ is quadratic, }\operatorname{Ht}(\tau)\leq T\}\leq c_{\epsilon}T^{\epsilon},\end{equation}
  where $\operatorname{Ht}$ denotes the \emph{absolute multiplicative height} of an algebraic point, as defined in \cite{Bombieri2006}.  As is fairly standard in this area, we will be playing this bound off against lower bounds provided by Galois considerations.  The lower bound is derived from the well-known inequality
  \[[\mathbb{Q}(j(\tau)):\mathbb{Q}]\geq c_\nu D^{\frac{1}{2}-\nu},\]
  where $\tau$ is a quadratic point of discriminant $\delta$, $D=|\delta|$ and $c_\nu$ is some constant depending only on $\nu$.  This, as usual, is derived from the Siegel lower bound for class numbers of quadratic orders; see \cite{Siegel1935} for the original result and \cite{Pila2011} for typical applications.  Let us fix $\nu=1/4$, so that $c=c_{1/4}$ is an absolute constant.
  
  We begin work on this lower bound as follows.  First, let $\tau$ be a quadratic point of discriminant $\delta$, with $f(\tau)=z$.  Recall from \ref{cor:ClassPolysPowerOfIrreducible} that $H^f_\delta(x)=p_z(x)^{k_\delta}$, where $p_z$ is the minimal polynomial of $z$.  We have
  \[[\mathbb{Q}(j(\tau)):\mathbb{Q}]=k_\delta[\mathbb{Q}(f(\tau)):\mathbb{Q}],\]
  whence $k_\delta\geq \frac{cD^{\frac{1}{4}}}{[\mathbb{Q}(z):\mathbb{Q}]}$.
  
  To each power of $p_z(x)$ arising in $H^f_\delta(x)$, there corresponds a new point $\tau'$, of discriminant $\delta$, with $f(\tau')=z$, since $z$ is a root of $p_z(x)$.  Hence there are at least $k_\delta$ distinct quadratic points $\tau$, of discriminant $\delta$, such that $f(\tau)=z$.   If we take, say, $\epsilon=\frac{1}{8}$ in the Pila-Wilkie bound (\ref{eqn:PilaWilkieBound}), we will get a contradiction whenever $D=|\delta|$ is greater than some constant $\kappa$.  This $\kappa$ will depend only on $[\mathbb{Q}(f(\tau)):\mathbb{Q}]$. 
   
  Since $f(Z)$ is a finite set, and $f(\tau)\in f(Z)$, so in particular $[\mathbb{Q}(f(\tau)):\mathbb{Q}]$ is bounded above by a constant depending only on $f$.  So the constant $\kappa$ above depends only on $f$.\end{proof}  
\end{lma}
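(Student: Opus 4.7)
The natural strategy is to split $\mathbb{F}$ into the generic locus on which $f$ has uniformly bounded finite fibres, and an exceptional locus $Z$ where fibres are positive-dimensional, and then count quadratic points in each. Concretely I would set
\[Z = \{\tau \in \mathbb{F} : \dim_{\mathbb{R}}(f^{-1}(f(\tau)) \cap \mathbb{F}) \geq 1\},\]
which is definable in $\mathbb{R}_{\mathrm{an,exp}}$. If $f$ is constant along a real-analytic path through $\tau$, then the Jacobian $J_f(\tau)$ vanishes, so $Z \subseteq \{J_f = 0\}$. By Lemma \ref{lma:Jacobians} the real-analytic function $J_f$ does not vanish identically on $\uh$, so by analytic continuation its zero set has empty interior. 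Cell decomposition then forces $Z$ to be a finite union of points and real-analytic arcs, and on $\mathbb{F} \setminus Z$ the function $f$ is definably finite-to-one with a uniform fibre-size bound $m$ by uniform boundedness in definable families. Moreover $f(Z)$ is a finite set, since $f$ is constant on each arc-component of $Z$.

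It remains to show that $Z$ itself contains only finitely many quadratic points, as this combined with the $m$-bound on the complement delivers the lemma. By Corollary \ref{cor:algArcsForAllFs}, no component of $Z$ is a real algebraic arc, so the Pila-Wilkie theorem supplies, for each $\varepsilon > 0$, a constant $c_\varepsilon$ bounding the number of quadratic points of height $\leq T$ in $Z$ by $c_\varepsilon T^\varepsilon$. Against this I would play the lower bound from Corollary \ref{cor:ClassPolysPowerOfIrreducible}: a quadratic $\tau \in Z$ of discriminant $\delta$ with $f(\tau) = z$ gives $H^f_\delta = p_z^{k_\delta}$, where $p_z$ is the minimal polynomial of $z$, and hence produces $k_\delta$ distinct quadratic points of discriminant $\delta$ mapping to $z$. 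The Siegel class-number estimate yields $k_\delta \geq c|\delta|^{1/4}/[\mathbb{Q}(z):\mathbb{Q}]$, and since $z \in f(Z)$ runs through the finite set $f(Z)$, the quantity $[\mathbb{Q}(z):\mathbb{Q}]$ is uniformly bounded in terms of $f$. Choosing $\varepsilon = 1/8$ and using that the height of a quadratic point in $\mathbb{F}$ of discriminant $\delta$ is polynomially bounded in $|\delta|$ gives a contradiction for $|\delta|$ large; hence only finitely many discriminants can occur among quadratic points of $Z$, and as each discriminant class gives finitely many points, the claim follows.

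The main obstacle is the routine but delicate calibration step in the final paragraph: one must verify that the absolute multiplicative height of a quadratic $\tau \in \mathbb{F}$ is controlled polynomially by $|\delta|$ so that the Pila-Wilkie bound $T^{1/8}$ is genuinely beaten by the Siegel lower bound $|\delta|^{1/4}$. Everything else is structural — the o-minimal geometry of $Z$ is pinned down by Lemma \ref{lma:Jacobians} and Corollary \ref{cor:algArcsForAllFs}, and the Galois-theoretic lower bound on fibre sizes is pinned down by Proposition \ref{propn:GaloisControl} and Corollary \ref{cor:ClassPolysPowerOfIrreducible}, so the proof essentially writes itself once these pieces are in hand.
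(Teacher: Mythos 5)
Your proposal is correct and follows essentially the same route as the paper: the same definable exceptional set $Z$ controlled by Lemma \ref{lma:Jacobians} and Corollary \ref{cor:algArcsForAllFs}, the same uniform finite-to-one bound off $Z$, and the same Pila--Wilkie versus Siegel/class-polynomial playoff (with $f(Z)$ finite bounding $[\mathbb{Q}(z):\mathbb{Q}]$) to finish the claim. The height-versus-discriminant calibration you flag is the standard step in such arguments and is implicit in the paper's proof as well.
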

Now the proof of \ref{thrm:WeakBoundedness} is easy.

 \begin{proof}[Proof of Theorem \ref{thrm:WeakBoundedness}]
  Let $f\in\mathbb{Q}(j,\chi^*)$ be nonconstant and consider the class polynomial of $f$:
  \[H^f_\delta(x)=\prod_{Q\in P_\delta}\left(x-f(\tau_Q)\right),\]  which by Corollary \ref{cor:ClassPolysPowerOfIrreducible} is a power of an irreducible polynomial.  Say $H^f_\delta(x)=p(x)^{k_\delta}$.  Then $p$ is necessarily the minimal polynomial of $f(\tau)$ for some (any) quadratic $\tau$ of discriminant $\delta$.

  Since $H^f_\delta=p^{k_\delta}$, there must be a root of $H^f_\delta$ of order $k_\delta$.  This means that there are (at least) $k_\delta$ distinct quadratic points $\tau_1,\dots,\tau_{k_\delta}\in\mathbb{F}$ with $f(\tau_i)=f(\tau_1)$ for all $i$.  By \ref{lma:CountingQuadraticPreimages}, there can be at most $M$ such quadratic points, whence $k_\delta\leq M$.
  
  For a quadratic point $\tau$ of discriminant $\delta$, the degree of $j(\tau)$ over $\mathbb{Q}$ is equal to the degree of $H^f_\delta$, while the degree of $f(\tau)$ over $\mathbb{Q}$ is equal to the degree of $p$.  Hence we must have
  \[[\mathbb{Q}(j(\tau)):\mathbb{Q}(f(\tau))]=\dfrac{\deg H^f_\delta}{\deg p} = k_\delta\leq M,\]
  as required.
 \end{proof}
 
\section{Uniformity}
  In this section we will prove the uniform version of \ref{thrm:WeakBoundedness}, namely Theorem \ref{thrm:BoundednessOfExtension}.  For this we require a certain amount of setup.  One crucial component is the Andr\'e-Oort result from \cite{Spence2016}, which we will state shortly.  First we need a definition.
  \begin{defn}
    A point in $\mathbb{C}^{2n}$ is called $(j,\chi^*)$-special if it takes the form $(j(\tau),\chi^*(\tau))$, for some quadratic $\tau$.
  \end{defn}
  The result also discusses $(j,\chi^*)$-special subvarieties of $\mathbb{C}^{2n}$.  The precise definition of these is not important; they are particular subvarieties of $\mathbb{C}^{2n}$ containing Zariski dense sets of $(j,\chi^*)$-special points.  We will only need to know the following.
  \begin{itemize}
   \item The $(j,\chi^*)$-special subvarieties of $\mathbb{C}^{2}$ are precisely the $(j,\chi^*)$-special points and $\mathbb{C}^2$ itself.
   \item The fibre $V_a$ of a $(j,\chi^*)$-special subvariety $V\suq\mathbb{C}^{2(m+n)}$ at a $(j,\chi^*)$-special point $a\in\mathbb{C}^{2n}$ is a union of finitely many $(j,\chi^*)$-special subvarieties of $\mathbb{C}^{2m}$; the number of connected components of $V_a$ depends only on $V$, not on $a$.
   \item If the fibre $V_a$ of a $(j,\chi^*)$-special subvariety $V\suq\mathbb{C}^{2+2n}$ at any $(j,\chi^*)$-special point $a\in\mathbb{C}^{2n}$ is equal to $\mathbb{C}^2$, then: for every $(j,\chi^*)$-special $b\in\mathbb{C}^{2n}$, $V_b$ is either empty or $\mathbb{C}^2$.
  \end{itemize}
  The Andr\'e-Oort theorem we need is the following, which was proven in \cite{Spence2016}.
  \begin{thrm}\label{thrm:AOforPi}
    Let $V\suq\mathbb{C}^{2n}$ be an algebraic variety.  Then $V$ contains only finitely many maximal $(j,\chi^*)$-special subvarieties.
  \end{thrm}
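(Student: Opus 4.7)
The plan is to follow the now-standard Pila--Zannier strategy, exploiting the Ax--Lindemann theorem for $(j,\chi^*)$ stated just before Corollary \ref{cor:algArcsForAllFs} together with the Galois-theoretic control coming from Proposition \ref{propn:GaloisControl}.

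First I would restrict the uniformising map $\pi:\uh^n\to\mathbb{C}^{2n}$ to the product $\mathbb{F}^n$ of standard fundamental domains; since $j$ and $\chi^*$ are each definable in $\mathbb{R}_{\text{an,exp}}$ on $\mathbb{F}$, so is $\pi$, and the preimage $Y=\pi^{-1}(V)\cap\mathbb{F}^n$ is therefore definable.  The argument then proceeds by induction on $n$, the case $n=1$ being essentially the explicit description of $(j,\chi^*)$-special subvarieties of $\mathbb{C}^2$ as points together with $\mathbb{C}^2$ itself.

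The lower-bound input is Galois-theoretic.  The $n$-fold analogue of Proposition \ref{propn:GaloisControl} says that the absolute Galois group acts on a $(j,\chi^*)$-special point $(j(\tau_{Q_1}),\chi^*(\tau_{Q_1}),\dots,j(\tau_{Q_n}),\chi^*(\tau_{Q_n}))\in V$ through the coordinate-wise action on the tuple $(\tau_{Q_1},\dots,\tau_{Q_n})$.  Combined with the Siegel class-number bound $h(\delta)\gg_\nu |\delta|^{1/2-\nu}$, this shows that the Galois orbit of such a point has size at least $c_\nu D^{1/2-\nu}$, where $D$ is the maximum of the $|\delta_i|$.  The upper-bound input is the Pila--Wilkie counting theorem applied to $Y$: for any $\epsilon>0$ there are at most $c_\epsilon T^\epsilon$ quadratic preimages of height at most $T$ lying outside the algebraic part of $Y$ (the union of its positive-dimensional connected real semi-algebraic subsets).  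Since heights of the $\tau_{Q_i}$ are polynomially bounded in the $|\delta_i|$, playing these bounds against each other forces any $(j,\chi^*)$-special point in $V$ of sufficiently large complexity $D$ to come from a preimage lying on a positive-dimensional real algebraic arc $S\suq Y$.  The Ax--Lindemann theorem upgrades $S$ to a weakly $\uh$-special subvariety $G\suq\pi^{-1}(V)$, and $\pi(G)$ is a positive-dimensional $(j,\chi^*)$-special subvariety $W\suq V$.

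The hard part I anticipate is the inductive bookkeeping needed to conclude \emph{finiteness} of \emph{maximal} $(j,\chi^*)$-special subvarieties, rather than merely showing that special points accumulate on positive-dimensional special subvarieties.  The standard trick is to quotient $V$ by the weakly special relations produced above, obtaining an algebraic variety in fewer coordinates to which the inductive hypothesis applies, and then to use the fibre properties listed after the definition of $(j,\chi^*)$-special subvariety to glue finitely many maximal specials in fibres into globally maximal specials in $V$.  A secondary awkwardness is the ineffectivity inherited from Siegel's bound, but this affects only quantitative statements, not the qualitative finiteness conclusion asserted here.
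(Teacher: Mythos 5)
The paper does not actually prove this theorem: it is imported wholesale from \cite{Spence2016}, so the only ``proof'' given here is a citation. Your Pila--Zannier outline (definability of $\pi$ on $\mathbb{F}^n$, Galois/Siegel lower bounds via the analogue of Proposition \ref{propn:GaloisControl}, Pila--Wilkie upper bounds, and the Ax--Lindemann statement to push the algebraic part into weakly $\uh$-special subvarieties) is precisely the strategy of that reference --- the paper itself notes that the Ax--Lindemann theorem was proven in \cite{Spence2016} as the main ingredient of this Andr\'e--Oort result --- so your approach matches the intended proof, with the steps you leave schematic (reduction to $V$ defined over $\overline{\mathbb{Q}}$ and the induction yielding finiteness of \emph{maximal} special subvarieties) being exactly the parts carried out in that paper rather than in this one.
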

  We will be combining this with the following theorem of Scanlon, which is Lemma 3.2 from his paper \cite{Scanlon2004}.
  \begin{thrm}\label{lma:UniformConstructibleOverQ}
    Let $K$ be an algebraically closed field, $X$ and $B$ algebraic varieties over $K$, $Y\suq X\times B$ a constructible subset, and $A\suq X(K)$.  There is a natural number $n$ and a constructible set $Z\suq X\times X^n$ such that, for any parameter $b\in B(K)$, there is some $a\in A^n$ for which $Y_b(K)\cap A=Z_a(K)\cap A$.
  \end{thrm}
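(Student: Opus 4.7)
The plan is to establish this as a piece of uniform model theory over $ACF$, combining quantifier elimination with an induction on dimension. Since $ACF$ admits quantifier elimination, the constructible family $Y\suq X\times B$ is cut out by a quantifier-free formula $\phi(x,b)$ whose polynomial degrees and Boolean complexity depend only on $Y$, not on $b$. In particular, every fiber $Y_b$ can be written as a finite Boolean combination of irreducible subvarieties $V_{1,b},\dots,V_{N,b}$ of $X$, with $N$ and the degrees of the $V_{i,b}$ bounded uniformly in $b$.

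First I would dispose of the zero-dimensional case: if every $V_{i,b}$ is a point, then $Y_b$ is a finite set of cardinality at most some $M$ depending only on $Y$, and $Y_b\cap A$ is encoded by taking $n=M$ and $Z=\{(x,a_1,\dots,a_n):\bigvee_i x=a_i\}$, choosing $a\in A^n$ to enumerate the points of $Y_b\cap A$ (padded by repetition). The induction on $\max_i\dim V_{i,b}$ then proceeds component by component. For each positive-dimensional $V=V_{i,b}$ there is a dichotomy: either $V\cap A$ is Zariski dense in $V$, in which case a uniformly bounded number of sufficiently generic points of $V\cap A$ already determine $V$ (and hence $V\cap A$) as the unique variety of the appropriate constructible type passing through them; or $V\cap A$ is contained in a proper subvariety $V'\subsetneq V$ of strictly smaller dimension, in which case $V\cap A=V'\cap A$ and the inductive hypothesis applies to $V'$.

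The main obstacle is patching this together uniformly in $b$: different components $V_{i,b}$ can fall into different cases of the dichotomy, and there is no a priori control over which. I would handle this by building $Z$ as a large Boolean combination indexed by the finitely many possible ``patterns'' of cases among the $N$ components. For each pattern, a finite recipe of parameters produces the required constructible set, with some parameters witnessing genericity on the Zariski-dense components and others parametrising the lower-dimensional envelopes of the non-dense components. Setting $n$ to be the maximum total parameter requirement over patterns and amalgamating the corresponding constructible sets into a single $Z\suq X\times X^n$ would then provide the desired uniform witness. The step I expect to be most delicate is making the ``lower-dimensional envelope'' selection itself uniformly definable in $b$; this is essentially the definability-of-types phenomenon characteristic of stable theories such as $ACF$, and is where the real bookkeeping of the argument seems to concentrate.
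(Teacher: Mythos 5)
The paper offers no proof of this statement at all: it is quoted verbatim as Lemma 3.2 of Scanlon's paper \cite{Scanlon2004}, and Scanlon's argument is local-stability-theoretic -- the trace $Y_b(K)\cap A$ is matched by an instance of a \emph{fixed} auxiliary formula with parameters drawn from $A$, which is exactly the definability-of-types (``honest definition'') phenomenon for the stable formula cutting out $Y$ in $\mathrm{ACF}$. So your proposal is an attempted independent proof, and the question is whether your geometric induction works. It does not, and the failure is precisely at the step you defer to the end. In the non-dense branch of your dichotomy, the ``lower-dimensional envelope'' $V'$ (in effect the Zariski closure of $V\cap A$) is not a member of any constructible family of bounded complexity as $b$ varies: its degree, and in the zero-dimensional case its cardinality, are not controlled by $Y$ at all. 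Concretely, take $X=\mathbb{A}^2$, $B=\mathbb{A}^1$, $Y=\{((x_1,x_2),b):x_2=b\}$, and $A=\bigcup_{k\geq 1}S_k$ where $S_k$ is a set of $k$ points on the line $x_2=k$. For $b=k$ the fiber is an irreducible curve and $Y_b\cap A=S_k$ is finite of unbounded size, so the only admissible envelopes of strictly smaller dimension are the sets $S_k$ themselves; your base case would then need $n\geq k$ for every $k$, and no single $n$ and $Z$ emerge from the procedure. The lemma is nonetheless true in this example, with $Z=\{((x_1,x_2),a,a'):a=a'\wedge x_2=a_2\}$ and parameters $a\in S_k$ (or $a\neq a'$ to produce the empty trace): the witness is the \emph{original} bounded-complexity family reparametrised by a point of $A$, not any description of the closure of the trace. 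This shows the mechanism you propose (recover the Zariski closure of the trace from boundedly many points, or enumerate it) is the wrong one, and that the step you label ``delicate bookkeeping'' -- uniform definability in $b$ of the envelope selection -- is not a residual detail but the entire content of the lemma; moreover, in the envelope formulation it is simply unattainable, since the envelopes do not form a definable family, whereas the stability-theoretic proof avoids envelopes altogether.

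Two secondary points. First, your zero-dimensional encoding $Z=\{(x,a_1,\dots,a_n):\bigvee_i x=a_i\}$ can never produce the empty trace, since $a_1\in A$ forces $a_1\in Z_a\cap A$; this, like your pattern-amalgamation step, needs a selector built from two distinct elements of $A$ (and indeed the statement as quoted is genuinely problematic when $\#A\leq 1$, a degenerate case irrelevant to the application in this paper, where $A$ is the infinite set of $(j,\chi^*)$-special points). Second, in the dense branch the assertion that boundedly many ``sufficiently generic'' points of $V\cap A$ determine $V$ as ``the unique variety of the appropriate constructible type through them'' is not justified as stated; the correct and repairable route is interpolation: a variety of degree $d$ is cut out by forms of degree bounded in terms of $d$, and the finite-dimensional space of such forms vanishing on points of $V\cap A$ stabilises after boundedly many greedily chosen points, after which $V$ is definably recovered from those parameters. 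That branch is salvageable; the non-dense branch, and hence the induction, is not.
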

  We will be applying this with $A$ being the set of $(j,\chi^*)$-special points in $X=\mathbb{C}^{2}$.  It essentially does all of the uniformisation work for us; the bulk of the proof of \ref{thrm:BoundednessOfExtension} lies in the following uniform version of Lemma \ref{lma:CountingQuadraticPreimages}.
  
  \begin{lma}\label{lma:UniformCountingQuadraticPreimages}
    For each natural number $d$, there is a constant $M_d$ such that, whenever $f\in\mathbb{Q}^{\leq d}(j,\chi^*)$ is nonconstant and $z\in\mathbb{C}$, we have
    \[\#\{\tau\in\mathbb{F}:f(\tau)=z,\tau\text{ is quadratic}.\}\leq M_d.\]
    \begin{proof}
      For each $d$, there is a natural number $n(d)$ such that $\mathbb{C}^{n(d)}$ parametrises elements of $\mathbb{C}^{\leq d}(X,Y)$ in the obvious way.  For $b\in\mathbb{C}^{n(d)}$, we will write $f_b(X,Y)$ for the corresponding element of $\mathbb{C}^{\leq d}(X,Y)$.
      
      Define a variety $V\suq\mathbb{C}^{2+n(d)+1}$ by
      \[(X,Y,b,z)\in V\Longleftrightarrow f_b(X,Y)=z.\]
      Lemma \ref{lma:UniformConstructibleOverQ}, applied to $Y$ (with $X=\mathbb{C}^2$, $B=\mathbb{C}^{n(d)+1}$ and $A$ equal to the set of $(j,\chi^*)$-special points), yields a natural number $N$ and a constructible set $Z\suq \mathbb{C}^{2+2N}$, such that for any parameter $(b,z)\in\overline{\mathbb{Q}}^{n(d)+1}$, there is a $(j,\chi^*)$-special point $a\in\mathbb{C}^{2N}$ such that $V_{b,z}$ and $Z_a$ (both subvarieties of $\mathbb{C}^2$) have the same $(j,\chi^*)$-special points.
      
      We can write
      \[Z=Z_1\setminus(Z_2\setminus(\dots\setminus Z_k))\]
      for some varieties $Z_i$.  
      
      Apply Theorem \ref{thrm:AOforPi} to each $Z_i$.  Then each $Z_i$ contains only finitely many maximal $(j,\chi^*)$-special subvarieties; call the collection of such subvarieties $\sigma(Z_i)$.
      
      For some $i$, it might be the case that $S\in\sigma(Z_i)$ contains $\mathbb{C}^{2}\times a$, for some $(j,\chi^*)$-special point $a\in\mathbb{C}^{2N}$.  However, by Theorem \ref{thrm:WeakBoundedness}, there are only finitely many $(j,\chi^*)$-special points in any $Z_a$ with $a$ corresponding to $b\in\mathbb{Q}^{n(d)}$ and $z\in\overline{\mathbb{Q}}$.  Hence we may safely ignore such $S$ and assume that no $S\in\sigma(Z_i)$ contains any $\mathbb{C}^2\times a$.
      
      The number of $(j,\chi^*)$-special points in $Z_a$ is then bounded above by
      \[M_d=\max\{\#S_a:S\in \sigma(Z_i),i\leq k.\}.\]
      By the properties of $(j,\chi^*)$-special subvarieties, (since we have excluded any $S$ for which $S_a=\mathbb{C}^2$ for any $a$) each $S_a$ is finite and bounded in size independently of $a$, so this is a well-defined maximum independent of $a$.          
		
      So take any $b\in\mathbb{Q}^{n(d)}$ and any $z\in\overline{\mathbb{Q}}$.  Let $f=f_b(j,\chi^*)\in\mathbb{Q}^{\leq d}(j,\chi^*)$.  Let $a\in\mathbb{C}^{2N}$ be the $(j,\chi^*)$-special point afforded by Lemma \ref{lma:UniformConstructibleOverQ}.  The quadratic points $\tau\in\mathbb{F}$ such that $f(\tau)=z$ are in one-to-one correspondence with the $(j,\chi^*)$-special points of $V_{b,z}$.  So the number of such $\tau$ is equal to the number of $(j,\chi^*)$-special points in $Z_a$, which is bounded above by $M_d$.              
    \end{proof}
  \end{lma}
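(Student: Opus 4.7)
The plan is to parametrise all rational functions $f \in \mathbb{C}^{\leq d}(X, Y)$ simultaneously, then apply Scanlon's Theorem \ref{lma:UniformConstructibleOverQ} to replace the entire family of preimage loci by a single constructible set, to which the Andr\'e-Oort Theorem \ref{thrm:AOforPi} can be applied once and for all. First I choose $n(d)$ so that $\mathbb{C}^{n(d)}$ parametrises the coefficients of all numerator/denominator pairs of $f_b \in \mathbb{C}^{\leq d}(X, Y)$, and define the constructible set
\[V = \{(X, Y, b, z) \in \mathbb{C}^{2} \times \mathbb{C}^{n(d)+1} : f_b(X, Y) = z\},\]
namely the graph of the universal rational function after clearing denominators and removing the pole locus. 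Applying Scanlon to $V$, with $X = \mathbb{C}^2$, $B = \mathbb{C}^{n(d)+1}$, and $A$ the set of $(j, \chi^*)$-special points in $\mathbb{C}^2$, yields a natural number $N$ and a constructible $Z \suq \mathbb{C}^{2 + 2N}$ such that, for every parameter $(b, z)$, there is a $(j, \chi^*)$-special $a \in \mathbb{C}^{2N}$ with $V_{b, z} \cap A = Z_a \cap A$.

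Next, decompose $Z$ as a finite Boolean combination $Z_1 \setminus (Z_2 \setminus (\cdots \setminus Z_k))$ of algebraic subvarieties, and apply Theorem \ref{thrm:AOforPi} to each $Z_i$. This yields a finite collection $\sigma(Z_i)$ of maximal $(j, \chi^*)$-special subvarieties covering every $(j, \chi^*)$-special point of $Z_i$. By the second listed property of $(j, \chi^*)$-special subvarieties, each $S \in \sigma(Z_i)$ whose fibre $S_a$ is never all of $\mathbb{C}^2$ has fibres over $(j, \chi^*)$-special $a$ which are finite and uniformly bounded in cardinality independently of $a$.

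The main obstacle I anticipate is the possibility that some $S \in \sigma(Z_i)$ has $S_a = \mathbb{C}^2$ for some (and, by the third listed property, then every nonempty) $(j, \chi^*)$-special fibre; such an $S$ would, on its own, contribute infinitely many $(j, \chi^*)$-special points. The pointwise Lemma \ref{lma:CountingQuadraticPreimages} rescues us here: for every specific $b \in \mathbb{Q}^{n(d)}$ parametrising a nonconstant $f_b$ and every $z \in \overline{\mathbb{Q}}$, we already know that $V_{b, z} \cap A$ is finite, and hence so is $Z_a \cap A$. This forces any such bad $S$ to fail to contribute $(j, \chi^*)$-special points to $V_{b, z}$ for parameters $(b, z)$ of interest, and the uniform properties of $(j, \chi^*)$-special subvarieties guarantee this failure persists across all $(j, \chi^*)$-special $a$; hence such $S$ can be discarded from $\sigma(Z_i)$ without altering the relevant intersections.

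Finally, setting
\[M_d = \max\{\#S_a : S \in \sigma(Z_i), i \leq k, a \text{ a } (j, \chi^*)\text{-special point}\}\]
over the remaining $S$ gives a finite constant independent of $a$. For any nonconstant $f_b$ with $b \in \mathbb{Q}^{n(d)}$ and any $z \in \mathbb{C}$ (necessarily in $\overline{\mathbb{Q}}$ whenever the relevant preimage is nonempty), the quadratic $\tau \in \mathbb{F}$ with $f_b(\tau) = z$ are in bijection with the $(j, \chi^*)$-special points of $V_{b, z}$, which coincide on $A$ with those of $Z_a$, and are therefore at most $M_d$ in number.
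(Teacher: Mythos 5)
Your proposal is correct and follows essentially the same route as the paper: parametrise $\mathbb{C}^{\leq d}(X,Y)$, apply Scanlon's lemma to the universal variety $V$, decompose the resulting constructible set, apply the Andr\'e--Oort theorem to each piece, discard the components whose fibres are all of $\mathbb{C}^2$ using the pointwise finiteness result, and take the uniform bound on the remaining finite fibres. The only cosmetic difference is that you invoke Lemma \ref{lma:CountingQuadraticPreimages} directly to rule out the bad components, where the paper cites Theorem \ref{thrm:WeakBoundedness}; these supply the same pointwise finiteness input.
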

  Now the proof of \ref{thrm:BoundednessOfExtension} goes exactly as the proof of Theorem \ref{thrm:WeakBoundedness} from Lemma \ref{lma:CountingQuadraticPreimages}.
  \begin{proof}[Proof of Theorem \ref{thrm:BoundednessOfExtension}]
    Let $f\in\mathbb{Q}^{\leq d}(j,\chi^*)$.  Recall that, for each discriminant $\delta$, we have $H_\delta^f=p_\delta^{k_\delta}$, where $p_\delta$ is irreducible, and is therefore necessarily the minimal polynomial of $f(\tau)$ for some (any) quadratic $\tau$ with $\delta(\tau)=\delta$.  It follows that $\{\tau_Q:Q\in P_\delta^1\}$ contains $k_\delta$ distinct quadratic points $\tau_1,\dots,\tau_{k_\delta}\in\mathbb{F}$ with $f(\tau_i)=f(\tau_1)$ for all $i$.  By Lemma \ref{lma:UniformCountingQuadraticPreimages}, there are at most $M_d$ such points, whence $k_\delta\leq M_d$.
    
    Finally,
    \[[\mathbb{Q}(j(\tau)):\mathbb{Q}(f(\tau))]=\dfrac{\deg H_{\delta(\tau)}^f}{\deg p_{\delta(\tau)}}=k_\delta\leq M_d.\]
  \end{proof}

\section{Further Work}
So far we have made no mention whatsoever of modular functions of level other than 1.  The methods demonstrated here should be quite applicable to modular functions with level, up to the inclusion of some currently missing ingredients.  The simplest case for higher level AHM functions does not require much.  Let $\Gamma_N^\mathbb{Q}$ be the field of meromorphic modular functions of level $N$, with rational q-expansions (including at infinity).  Thanks to work of Mertens/Rolen \cite{Mertens2015} and Schertz \cite{Schertz2002}, the equivalent of Proposition \ref{propn:GaloisControl} holds for functions in the field $\Gamma_N^\mathbb{Q}(\chi^*)$, giving us the necessary Galois information.  Definability follows from the q-expansions, so the only remaining detail here is to get some control on the Jacobians of the relevant functions.  A suitable analogue of Lemma \ref{lma:Jacobians} should not be too difficult to attain, at which point the methods would yield 
\[[\mathbb{Q}(f(\tau)):\mathbb{Q}]\geq \dfrac{\# P_\delta^N}{M}\]
where $f=p(\chi^*,g_1,\dots,g_k)$, where the $g_i$ are the generators of $\Gamma_N^\mathbb{Q}$ and $p$ is some rational function.  Moreover, the constant $M$ would depend only upon $N$ and the degree of $p$.  The function $P$ considered by Mertens and Rolen lies in $\Gamma_N^\mathbb{Q}(\chi^*)$, so the above would yield a weak generalisation of their result.  However, the story of AHM functions with level does not end there.  

Consider, for instance, a level $N$ meromorphic modular form $f$, of weight $2$.  Then the function
$\dfrac{E_2^*}{f}$
is certainly an AHM function of level $N$, but need not lie in $\Gamma_N^\mathbb{Q}(\chi^*)$.  In this situation, we would as usual need some analogue of \ref{lma:Jacobians}, giving us some control over the zero sets of the Jacobians of the relevant functions.  This, once again, should not be too difficult.  The difficulty lies instead in the Galois information.  It is not at all clear that any analogue of \ref{propn:GaloisControl} exists, since the work of Mertens, Rolen and Schertz does not apply.  To progress with this problem, one would likely need to extend the work of Schertz into the context of AHM functions.  This would seem to be by far the most significant challenge presenting itself in attempting to extend the results of this paper to functions of higher level.  I will postpone further discussion of this problem for later work.

\bibliographystyle{../../bib/scabbrv}
\bibliography{../../bib/thebib}
\end{document}